\newtheorem{theorem}{Theorem}[section]
\newtheorem{lemma}[theorem]{Lemma}
\newtheorem{proposition}[theorem]{Proposition}
\newtheorem{corollary}[theorem]{Corollary}
\theoremstyle{definition}
\newtheorem{definition}[theorem]{Definition}
\newtheorem{remark}[theorem]{Remark}
\newcommand\res{\mathop{\hbox{\vrule height 7pt width .3pt depth 0pt\vrule height .3pt width 5pt depth 0pt}}\nolimits}
\newcommand{\HH}{\mathcal{H}}
\newcommand{\diam}{\text{diam}}
\newcommand{\dist}{\text{dist}}
\title[An elementary rectifiability lemma]{An elementary rectifiability lemma and some applications}
\author[C. De Lellis]{Camillo De Lellis}
\address{School of Mathematics, Institute for Advanced Study, 1 Einstein Dr., Princeton NJ 08540, USA}
\email{camillo.delellis@ias.edu}
\author[I. Fleschler]{Ian Fleschler}
\address{Department of Mathematics, Fine Hall, Princeton University, Washington Road, Princeton, NJ 08540, USA}
\email{imf@princeton.edu}
\begin{document}

\maketitle

\begin{abstract}
We generalize a classical theorem of Besicovitch, showing that, for any positive integers $k<n$, if $E\subset \mathbb R^n$ is a Souslin set which is not $\mathcal{H}^k$-$\sigma$-finite, then $E$ contains a purely unrectifiable closed set $F$ with $0< \mathcal{H}^k (F) < \infty$. Therefore, if $E\subset \mathbb R^n$ is a Souslin set with the property that every closed subset with finite $\mathcal{H}^k$ measure is $k$-rectifiable, then $E$ is $k$-rectifiable. We also point out that this theorem holds in a suitable class of metric spaces. Our interest is motivated by recent studies of the structure of the singular sets of several objects in geometric analysis and we explain the usefulness of our lemma with some examples. 
\end{abstract}

\section{Introduction}

The purpose of this note is to show the following lemma in geometric measure theory and to show some interesting applications of it. As usual $\mathcal{H}^s$ denotes the Hausdorff $s$-dimensional measure, while for the definition of Souslin sets (also called analytic sets) we refer to \cite[Section 2.2.10]{Federer}.

\begin{theorem}\label{t:main} Let $1\leq k < n$ be two given integers.
Assume $E\subset \mathbb R^n$ is a Souslin set which is not $\mathcal{H}^k$ $\sigma$-finite. Then $E$ contains a closed subset $F$ such that $0 < \mathcal{H}^k (F)< \infty$ and which is purely $k$-unrectifiable, namely $\mathcal{H}^k (F\cap \Gamma)=0$ for every Lipschitz $k$-dimensional graph $\Gamma$. 
\end{theorem}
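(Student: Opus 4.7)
The starting point is the classical theorem of Davies: every Souslin subset of $\mathbb{R}^n$ that is not $\mathcal{H}^k$-$\sigma$-finite contains a compact subset of positive finite $\mathcal{H}^k$ measure. Combined with the standard decomposition of a $\sigma$-finite $\mathcal{H}^k$-measurable set into a $k$-rectifiable part and a purely $k$-unrectifiable part, this reduces the theorem to producing such a compact subset $K\subset E$ whose purely $k$-unrectifiable part $U_K$ has positive $\mathcal{H}^k$ measure, for then any closed subset of $U_K$ of positive $\mathcal{H}^k$ measure is the desired $F$.

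My plan is a multi-scale dyadic construction in the spirit of Besicovitch's original argument for $k=1$, $n=2$. After reducing to bounded $E$ by a countable cover and pigeonhole, I would inductively build a nested sequence $\mathcal{C}_0\supset \mathcal{C}_1\supset \cdots$ of finite families of closed dyadic cubes of side $2^{-m}$, each family selecting cubes whose intersection with $E$ is not $\mathcal{H}^k$-$\sigma$-finite, and arrange the selections so that: (i) $\sum_{Q\in\mathcal{C}_m}(\diam Q)^k$ is uniformly bounded in $m$, giving $\mathcal{H}^k(F)<\infty$ for the Cantor-like limit $F=\bigcap_m\bigcup_{Q\in\mathcal{C}_m} Q$; (ii) a matching Frostman-type lower bound on these sums gives $\mathcal{H}^k(F)>0$; (iii) the selected cubes at each scale are ``generic'' in direction, in the sense that the orthogonal projections of $F$ onto $\gamma_{n,k}$-a.e.\ $V\in\mathrm{Gr}(n,k)$ are $\mathcal{L}^k$-null. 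By the Besicovitch--Federer projection theorem, (iii) forces $F$ to be purely $k$-unrectifiable.

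The principal obstacle is the dyadic selection step, namely constructing $\mathcal{C}_{m+1}$ inside $\mathcal{C}_m$ so as to satisfy (i)--(iii) simultaneously while propagating the non-$\mathcal{H}^k$-$\sigma$-finiteness of $E\cap Q$ to enough child cubes. The tempting ``saturation'' shortcut -- construct a maximal rectifiable $F_\sigma$ subset $R^*\subset E$ and apply Davies to $E\setminus R^*$ -- does not work in this generality: for $E=\mathbb{R}^2$ and $k=1$, a Fubini argument on the $2$-parameter family of lines shows that no $\sigma$-finite $F_\sigma$ subset of $\mathbb{R}^2$ can absorb every compact line segment modulo $\mathcal{H}^1$-null sets, so the residue always retains compact $1$-rectifiable subsets of positive measure and Davies applied to it will generically return a rectifiable subset. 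The combinatorial heart of the proof is therefore the scale-by-scale selection, using the non-$\sigma$-finiteness hypothesis to guarantee enough room in each parent cube to pick a child family satisfying the genericity condition (iii) while preserving the measure bounds (i) and (ii).
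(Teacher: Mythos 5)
Your proposal correctly identifies that some Cantor-type multi-scale construction is required, and it correctly diagnoses why a naive ``exhaust the rectifiable part and apply Davies'' shortcut cannot work. But the step you flag as ``the principal obstacle'' --- the scale-by-scale selection that simultaneously preserves a Frostman lower bound (ii) while carving enough holes to force unrectifiability (iii) --- is not merely an obstacle: it is the entire content of the theorem, and as written your proposal contains no idea for resolving it. The tension is real and cannot be brute-forced. If you try to discard a definite fraction of child cubes at every dyadic generation (to make projections null or to create holes in every direction), the surviving Hausdorff content shrinks geometrically and you lose $\mathcal{H}^k(F) > 0$; if you discard too little, you retain no grip on unrectifiability.

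The idea that unlocks this tension, and which is absent from your sketch, is the gauge-function upgrade. By a theorem of Rogers (for closed sets) combined with a theorem of Davies/Sion--Sjerve (to pass from Souslin to compact), one can replace the gauge $r^k$ by a gauge $h$ with $h(r)/r^k \to 0$ as $r\downarrow 0$ such that $E$ is still not $\mathcal{H}^h$-$\sigma$-finite. A Frostman lemma for general gauges then gives a nontrivial measure $\mu$ with $\mu(Q)\le h(\diam Q)$ on dyadic cubes. The vanishing ratio $h(r)/r^k$ is exactly the ``extra room'' you need: at each selected generation of scales one can push all the mass of a parent cube into a single grandchild (multiplying densities by a bounded factor $2^{n\ell}$) and still keep $\nu(Q)\lesssim (\diam Q)^k$, because $h$ has decayed by an arbitrarily large factor relative to $r^k$ by the time you reach that scale. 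This produces a measure whose support is ``$\ell$-sparse'' (a hole of definite relative size in every dyadic cube at a sequence of scales $\to 0$), and a short density/approximate-tangent argument against Lipschitz graphs shows that an $\ell$-sparse set of positive finite $\mathcal{H}^k$ measure must be purely $k$-unrectifiable. That elementary criterion replaces your appeal to Besicovitch--Federer: the latter would also conclude, but it is a substantially deeper theorem and, more importantly, does not by itself tell you how to make the selections in a way compatible with (i) and (ii).

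In short: your target (iii) (generic directions, then Besicovitch--Federer) is a genuinely different route to unrectifiability from the paper's $\ell$-sparsity criterion, and could in principle work, but the decisive step --- how to afford the holes without killing the measure --- is missing, and the missing ingredient is precisely Rogers' gauge-function theorem. Without it, the plan does not close.
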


In our proof the set $F$ is the support of a nontrivial Frostman measure $\mu$, namely a nonnegative Radon measure $\mu$ with the property that 
\begin{equation}\label{e:Frostman}
\mu (B_r (x)) \leq r^k \qquad \mbox{for every $x$ and every $r$}.
\end{equation}
As a simple corollary we then get the following

\begin{corollary}\label{c:main} Let $1\leq k < n$ be two given integers.
Assume $E\subset \mathbb R^n$ is a Souslin subset and assume that any nonnegative Radon measure satisfying \eqref{e:Frostman}, supported in $E$, and with $\liminf_{r\downarrow 0} r^{-k} \mu (B_r (x)) > 0$ for $\mu$-a.e. $x$ is $k$-rectifiable (namely there is a $k$-rectifiable set $R$ and a Borel function $f$ such that $\mu = f \mathcal{H}^k \res R$). Then $E$ is $k$-rectifiable.
\end{corollary}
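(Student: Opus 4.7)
The plan is to prove the contrapositive: assume $E$ is not $k$-rectifiable and produce a Frostman measure supported in $E$, with $\liminf_{r\downarrow 0} r^{-k}\mu(B_r(x)) > 0$ for $\mu$-a.e.~$x$, which is not $k$-rectifiable, contradicting the hypothesis. The first task is to extract a closed, purely $k$-unrectifiable subset $F\subset E$ with $0<\mathcal{H}^k(F)<\infty$. If $E$ is not $\mathcal{H}^k$-$\sigma$-finite this is exactly what Theorem~\ref{t:main} gives. If instead $E$ is $\mathcal{H}^k$-$\sigma$-finite then, since every $k$-rectifiable set is $\mathcal{H}^k$-$\sigma$-finite, the non-rectifiability of $E$ together with the classical rectifiable/purely unrectifiable decomposition (applied to a Borel cover of $E$ by sets of finite $\mathcal{H}^k$-measure) yields a Borel, purely $k$-unrectifiable subset of $E$ of positive finite $\mathcal{H}^k$-measure; inner regularity of $\mathcal{H}^k$ then produces the desired closed $F$.

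On $F$ I would take a Frostman measure $\mu$---supplied by Theorem~\ref{t:main} in the first case and by Frostman's lemma applied to the closed set $F$ in the second---so that $\mathrm{supp}(\mu)\subset F$, $\mu(F)>0$, and $\mu(B_r(x))\leq r^k$ for all $x$ and $r$. Such a $\mu$ is automatically not $k$-rectifiable: if $\mu=f\mathcal{H}^k\res R$ with $R$ a $k$-rectifiable set, then pure unrectifiability of $F$ gives $\mathcal{H}^k(F\cap R)=0$, hence $\mu(F)=\int_{F\cap R}f\,d\mathcal{H}^k=0$, contradicting $\mu(F)>0$.

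The remaining step, which I expect to be the main technical point, is to verify the positive lower density condition. Set $Z:=\{x:\Theta_*^k(\mu,x)=0\}$ where $\Theta_*^k(\mu,x):=\liminf_{r\downarrow 0}r^{-k}\mu(B_r(x))$. Provided $\mu(F\setminus Z)>0$, the restriction $\mu':=\mu\res(F\setminus Z)$ is again Frostman and supported in $E$, and by the Lebesgue--Besicovitch differentiation theorem applied to $\mathbf{1}_{F\setminus Z}$ with respect to $\mu$ one obtains $\Theta_*^k(\mu',x)=\Theta_*^k(\mu,x)>0$ for $\mu'$-a.e.~$x$; since $\mu'$ is still supported on the purely unrectifiable $F$ it is not $k$-rectifiable by the previous paragraph, yielding the desired contradiction. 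To rule out the degenerate case $\mu(Z)=\mu(F)$ the plan is to use a Vitali-type covering argument: for each $\alpha>0$ and each $x\in Z$ there exist arbitrarily small $r$ with $\mu(B_r(x))<\alpha r^k$, and extracting a disjoint family $\{B_{r_i}(x_i)\}$ whose $5$-fold dilates cover $Z$ and combining the Frostman bound $\mu(B_{5r_i}(x_i))\leq 5^k r_i^k$ with the finiteness $\mathcal{H}^k(F)<\infty$ should force $\mu(Z)\to 0$ as $\alpha\downarrow 0$. If this direct argument does not close, a backup is to first pass to a closed $F'\subset F$ of positive finite $\mathcal{H}^k$-measure on which the lower $k$-density of $\mathcal{H}^k\res F'$ is itself positive $\mathcal{H}^k$-a.e., and then apply Frostman's lemma on $F'$.
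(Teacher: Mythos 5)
Your overall strategy is the right one and is the natural reading of the paper's one‑line ``simple corollary'': argue the contrapositive, extract a closed purely $k$‑unrectifiable $F\subset E$ with $0<\mathcal{H}^k(F)<\infty$ (Theorem~\ref{t:main} when $E$ is not $\sigma$‑finite, and the rectifiable/purely‑unrectifiable decomposition together with inner regularity when it is), put a Frostman measure on $F$, and observe that pure unrectifiability plus $\mu\ll\mathcal{H}^k$ forces $\mu(R)=0$ for every $k$‑rectifiable $R$, so $\mu$ cannot be of the form $f\,\mathcal H^k\res R$. All of that is correct.

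The place where the argument stalls is exactly the one you flag — guaranteeing the positive lower $k$‑density — and neither of your two routes closes as written. In the Vitali sketch you obtain a disjoint family $B_{r_i}(x_i)$, $x_i\in Z$, with $\mu(B_{r_i}(x_i))<\alpha r_i^k$ and $\mu(Z)\le\sum\mu(B_{5r_i}(x_i))\le 5^k\sum r_i^k$, but there is no upper bound on $\sum r_i^k$: the hypothesis $\mu(B_{r_i})<\alpha r_i^k$ only gives $\sum r_i^k>\alpha^{-1}\sum\mu(B_{r_i})$, which goes the wrong way and in fact blows up as $\alpha\downarrow 0$. The finiteness $\mathcal H^k(F)<\infty$ does not bound $\sum r_i^k$ for a disjoint family of balls centred on $F$ whose $\mu$‑mass is \emph{small} — that is precisely the regime where the sum of $r_i^k$ can be large. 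The backup plan is really the same issue moved elsewhere: since $\mathcal H^k(F'\cap B_r(x))\le\mathcal H^k(F\cap B_r(x))$ for every $F'\subset F$, passing to a subset can only decrease the lower density, so ``pass to $F'$ on which the lower density is positive a.e.''~already presupposes that $\Theta^k_*(\mathcal H^k\res F,\cdot)>0$ on a set of positive $\mathcal H^k$‑measure. (Moreover, any Frostman $\mu$ supported in $F$ is absolutely continuous with respect to $\mathcal H^k\res F$, and by the Lebesgue differentiation theorem $\Theta^k_*(\mu,x)=\tfrac{d\mu}{d(\mathcal H^k\res F)}(x)\,\Theta^k_*(\mathcal H^k\res F,x)$ for $\mu$‑a.e.\ $x$, so one cannot evade the question by choosing a cleverer Frostman measure on $F$.) What is needed, and what you should either prove or cite, is the statement that for $\mathcal H^k$‑measurable $F$ with $\mathcal H^k(F)<\infty$ the set $\{x\in F:\Theta^k_*(\mathcal H^k\res F,x)=0\}$ is $\mathcal H^k$‑null, or alternatively that the specific Frostman measure produced in the proof of Theorem~\ref{t:main} has positive lower $k$‑density $\mu$‑a.e. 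As submitted, this step is a genuine gap.
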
 

\begin{remark}\label{r:doubling}
Since any Borel set is a Souslin set, the theorem and the corollary apply to Borel sets $E$.
\end{remark}

\begin{remark}\label{r:metric}
The theorem also holds when $E \subset (X,d)$ is a Souslin subset of a $\sigma$-compact doubling metric space, i.e. a metric space for which there exists $M$ with the property that every ball of radius $r$ can be covered by at most $M$ balls of radius $r/2$. Since our main interest is in the Euclidean space we will only briefly sketch how to handle this more general case.
\end{remark}

The case $n=2$ and $k=1$ of Theorem \ref{t:main} is contained in a classical work of Besicovitch, cf. \cite[Theorem 6]{Besicovitch}. However, in spite of its rather elementary nature, we have not been able to find Theorem \ref{t:main} in the literature, nor we have found any work asking the fairly obvious question of whether Besicovitch's theorem can be generalized to arbitrary dimension and codimension. 

\subsection{Rectifiability questions in geometric analysis} Our interest has been partially spurred by the groundbreaking work of Naber and Valtorta \cite{NV}.
In several problems of geometric nature, for instance in the theory of minimal surfaces, that of harmonic maps, or that of Ricci limits, one is naturally lead to study solutions of PDEs, or minimal/critical points of variational problems, or generalized spaces, which are not everywhere smooth, but for which a suitable regularity theory gives upper bounds on the dimension of the singular set (defined as the complement of all points at which the object of interest is smooth). When the upper bound is optimal, namely it matches the dimension of the singular set for some known examples, the next natural question is whether the singular set has some more structure. In some of the examples mentioned above, i.e. in the case of singularities of harmonic maps and of minimal surfaces, showing even just the rectifiability of the singular set has been surprisingly hard. In some notable cases (minimizing harmonic maps, area-minimizing hypersurfaces, and mod-$2$ minimizing surfaces in any dimension and codimension) this was achieved in pioneering works of Leon Simon in the nineties. We refer to chapter 15 of \cite{MattilaSurvey} for a more detailed overview of questions on the rectifiability of singularities of solutions to geometric variational problems.

A few years ago Naber and Valtorta introduced in \cite{NV} a rather powerful and flexible technique to recover and improve Simon's rectifiability results. The approach of \cite{NV} has in fact been extended to several other situations, especially due to its flexibility (cf. for instance \cite{O1,O2,CJN,DMSV,DS,ENV,FS1,FS2,FS3,NV2,NV3}): while Simon's proof uses quite hard PDE techniques, the approach of Naber and Valtorta uses very little of the problem at hand, the real key point being the availability of a monotonicity formula with some suitable structure, a property which is common to a variety of situations in geometric analysis. 

Roughly speaking the approach of \cite{NV} can be subdivided into three steps. To fix ideas we assume that the singular set we are interested in has Hausdorff dimension $k$. 
\begin{itemize}
\item[(i)] A first general theorem, which has been independently discovered by Azzam and Tolsa in \cite{AT}, ensures the $k$-rectifiability of a $k$-dimensional measure $\mu$ under a suitable sharp control of what in the literature is called $L^2$ $\beta$-number. For a particular case of the statement see Theorem \ref{t:ANTV} below (the version of \cite{ENV} is slightly stronger, the version we refer to was already proved combining \cite{AT} and \cite{T}). 
\item[(ii)] The sharp control needed in (i) is then achieved for every Frostman $k$-dimensional measure $\mu$ which is supported in the singular set using in a careful and clever way the monotonicity formula available for the problem.
\item[(iii)] If one knew the $\sigma$-finiteness of the singular set with respect to the $\mathcal{H}^k$ measure, (i) and (ii) would then immediately imply its rectifiability. However we only know a priori that its Hausdorff dimension is $k$. To overcome this difficulty, Naber and Valtorta use quite subtle covering arguments and an approximate version of (i)-(ii) for suitably discretized measures. An alternative approach is given in \cite{CJN} through what the authors call ``neck regions''
\end{itemize}
The reason of our interest in Theorem \ref{t:main} should now be obvious: with the latter at hand, we can just completely bypass point (iii) in the Naber-Valtorta strategy, and conclude immediately the $\sigma$-finiteness and rectifiability from (i) and (ii) at an ``abstract level''. It must be however noted that, while Theorem \ref{t:main} is thus a useful general tool, which allows to bypass a quite sizable and difficult argument in (iii), it does not have its exact same power. In fact, under certain additional assumptions, Naber and Valtorta are able to use their arguments in step (iii) to conclude also the local $\mathcal{H}^k$-{\em finiteness} of the singular set. We do not exclude the possibility that, once we know the rectifiability through Theorem \ref{t:main}, the argument of Naber-Valtorta for proving $\mathcal{H}^k$ local finiteness could be simplified and streamlined: we have simply not tried to do it.

For completeness we end this paragraph by stating the Azzam-Naber-Tolsa-Valtorta Theorem referenced in point (i) above.

\begin{theorem}\label{t:ANTV}
Let $S\subset \mathbb R^n$ be an $\mathcal{H}^k$-measurable set with $0 <\mathcal{H}^k (S) < \infty$ and consider $\mu:= \mathcal{H}^k \res S$. Then $S$ is $k$-rectifiable if and only if
\[
\int_0^1 \beta_{2,\mu}^k (x,s)^2\, \frac{ds}{s} < \infty \qquad \mbox{for $\mu$-a.e. $x$,}
\]
where 
\[
\beta_{2,\mu}^k (x,s) := \inf \left(\left\{r^{-k-2} \int_{B_r (x)} \dist^2 (y, L)\, d\mu (y)\right)^{1/2}: \mbox{ $L$ is a $k$-dim. affine space}\right\}\, .
\]
\end{theorem}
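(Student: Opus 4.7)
The plan is to prove the two implications separately; the forward direction is classical while the converse carries essentially all of the work.

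\medskip

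\noindent\textbf{Forward direction ($k$-rectifiability implies finiteness of the $\beta_2$-integral).} I would decompose $S$ up to a $\mathcal{H}^k$-null set into countably many Lipschitz $k$-graphs $\Gamma_i$ and, using subadditivity together with the fact that on a rectifiable set the upper density $\Theta^{*,k}(\mu,\cdot)=1$ holds $\mu$-a.e., reduce to showing the bound for $\mathcal{H}^k\res\Gamma$ on a single Lipschitz graph $\Gamma$ of a function $f\colon L\to L^\perp$ with $\mathcal{H}^k(\Gamma)<\infty$. On such a graph the classical Dorronsoro theorem for Lipschitz functions produces a Carleson-type bound on the $L^2$-error of affine approximation of $f$ at all scales; transferring this bound from $L$ to $\Gamma$ via the bilipschitz graph map yields
\[
\int_\Gamma\int_0^1\beta_{2,\mathcal{H}^k\res\Gamma}^k(x,s)^2\,\frac{ds}{s}\,d\mathcal{H}^k(x)<\infty.
\]
Fubini then gives $\mu$-a.e.\ finiteness on each $\Gamma_i$, and hence on $S$.

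\medskip

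\noindent\textbf{Converse direction.} I would argue by contradiction. If $S$ is not $k$-rectifiable, standard density and Egorov reductions produce a compact $S_0\subset S$ with $\mu(S_0)>0$ on which $\mu$ is purely unrectifiable, both $k$-densities are pinched between two positive constants, and $\int_0^{r_0}\beta_{2,\mu}^k(x,s)^2\,\tfrac{ds}{s}\leq M$ uniformly for $x\in S_0$. I would then run a Christ-type dyadic corona decomposition of $\mathrm{spt}\,\mu$: to each dyadic cube $Q$ assign an affine $k$-plane $L_Q$ approximately minimizing $\beta_{2,\mu}^k(x_Q,\ell(Q))$, and use the quantitative comparison
\[
d_H\bigl(L_Q\cap B,\,L_{Q'}\cap B\bigr)^2\;\lesssim\;\ell(Q)^2\bigl(\beta_{2,\mu}^k(x_Q,\ell(Q))^2+\beta_{2,\mu}^k(x_{Q'},\ell(Q'))^2\bigr)
\]
for adjacent cubes of comparable size to show that along chains of cubes where $\sum \beta_2^2$ is small the planes barely rotate. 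Organize the cubes into trees in which all $L_Q$'s stay close to a single reference $k$-plane; within each tree, glue the local affine approximations $L_Q$ via a Whitney partition of unity (a David-style extension) to produce a Lipschitz $k$-graph that captures a definite fraction of the mass of the top cube. Iterating on the remainder shows that $\mu$ is $k$-rectifiable, contradicting the choice of $S_0$.

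\medskip

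\noindent\textbf{Main obstacle.} The delicate step is the passage from the pointwise a.e.\ $\beta_2$-integrability to a uniform Carleson-type estimate on a fixed corona and from there to an honest Lipschitz graph. Because $\mu$ is not assumed to be Ahlfors regular or even doubling, the stopping rule has to be designed so as not to leak mass in low-density regions, and the graph construction must proceed without any a priori regularity of $\mathrm{spt}\,\mu$. A secondary technical issue is that the comparison lemma between $L_Q$ and $L_{Q'}$ degenerates on cubes carrying little mass, so one first restricts to a subfamily of ``full'' cubes and shows that the discarded cubes form a uniformly rectifiable piece. I would follow the approach of Azzam--Tolsa in \cite{AT}, which was developed precisely in this non-doubling setting and gives the stated if-and-only-if characterization; an alternative would be the Naber--Valtorta best-plane/BV machinery of \cite{NV}.
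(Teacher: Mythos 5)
The paper does not prove Theorem \ref{t:ANTV}: it is stated as a known black box, quoted from \cite{AT} (and, in a slightly weaker form, from \cite{NV}), so there is no internal argument to compare yours against. Your proposal is accordingly a roadmap of the literature proof rather than a proof. The converse direction correctly names the corona/stopping-time strategy of Azzam--Tolsa, but every hard step --- running the stopping rule for a non-doubling measure, controlling the approximating planes on cubes of small mass, gluing the local planes into a Lipschitz graph that captures a definite fraction of the measure --- is exactly where the content of \cite{AT} lies, and none of it is carried out; the argument as written reduces to the citation.

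There is also a genuine gap in your forward direction. After writing $S$ as a union of Lipschitz graphs $\Gamma_i$ up to a null set, you cannot reduce to a single graph ``by subadditivity'': $\beta_{2,\mu}^k(x,s)$ is an infimum over planes $L$ of $s^{-k-2}\int_{B_s(x)}\dist^2(y,L)\,d\mu$, and a plane that is nearly optimal for $\mu\res\Gamma_i$ near $x\in\Gamma_i$ need not control the mass contributed by the other pieces $\Gamma_j$ inside $B_s(x)$; the infima do not add. The density information you invoke only yields $\mu\bigl(B_s(x)\setminus\Gamma_i\bigr)=o(s^k)$ at a.e.\ $x\in\Gamma_i$, which is not enough to make the cross terms square-summable against $ds/s$ without further work. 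Handling precisely these cross terms is the content of Tolsa's Part~I of the characterization (the companion paper to \cite{AT}), so this direction is not a routine consequence of Dorronsoro's theorem either. If you intend to use Theorem \ref{t:ANTV} the way the paper does, the honest option is to cite it; if you intend to prove it, both directions need substantially more than what is sketched here.
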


\subsection{Quantitative rectifiability} We are also interested in Theorem \ref{t:main} in the broader context of quantitative rectifiability. In quantitative rectifiability many questions (which a posteriori can have many fruitful applications) can be phrased in the following form:
\begin{itemize}
\item[(Q)] Consider a quantity $\alpha_E(x,r)$ which measures ``how close to $k$-rectifiable a set $E$ is'' (such as some form of flatness or symmetry) at the scale $r$ around the point $x$. Having fixed $E$ consider
\begin{equation*}
A_{\alpha_{E}}:=\left \{x \in E: \int_0^1 \alpha_E(x,r)^2 \, \frac{dr}{r} <\infty\right\}.
\end{equation*}
Is $A$ rectifiable? More specifically, if we knew that $A_{\alpha_{E}}=E$, is $E$ rectifiable? 
\end{itemize}
The idea is then to use Theorem \ref{t:main}, in appropriate situations, to reduce the question above to the much more favorable case where $E$ has finite $\mathcal{H}^k$ measure and $A_{\alpha_{E}}=E$. 

For concreteness we will give two examples.
We define the $\beta$ coefficient for the Hausdorff content as:
\begin{equation*}
\widetilde{\beta}^k_{2,E}(x,r):= \inf \left\{\left(r^{-k-2} \int_{B_r (x)\cap E} \dist^2 (y, L)\, d\mathcal{H}_{\infty}^{k}(y)\right)^{1/2}: \mbox{ $L$ is a $k$-dim. affine space}\right\}\ .
\end{equation*}
The integral above is with respect to the Hausdorff content $\mathcal{H}^k_\infty$, which is {\em not a measure}. In particular we define such integral as
\begin{equation*}
    \int_A f(x)^p d\mathcal{H}_{\infty}^k(x):=\int_{0}^\infty \mathcal{H}_{\infty}^k\left(\{x \in A | f(x)>t\} \right)t^{p-1}dt.
\end{equation*}
Let $E$ be a compact set. We first remark that $A_{\widetilde{\beta}_{2,E}^k}$ is Borel. The coefficient $\widetilde{\beta}_{2,E}$ is upper semicontinous on $(x,r)$ and the integral can be equivalently taken as a discrete sum over dyadic scales. This information is enough to verify that the set is Borel since the coefficient is a Borel measurable function of $x$.

Let $\mu$ be any Frostman (or $k$ upper regular) measure supported on $E$. It is rather easy to see then that:
\begin{equation*}
    \beta_{2,\mu}^k(x,r) \lesssim \widetilde{\beta}_{2,E}^k(x,r).
\end{equation*}
This means that, if $F \subseteq A_{\widetilde{\beta}_{2,E}^k}$ is a Borel set such that $\mu=c\mathcal{H}^k\res F$ (for some constant $c>0$) is Frostman, then $\mu$ has finite $\beta_{2,\mu}^k$ square function. In particular we can use Theorem \ref{t:ANTV} to say that every such subset is rectifiable. Fix now any $F\subset A_{\widetilde{\beta}_{2,E}^k}$ with finite Hausdorff measure. A simple exercise shows that $F$ can be written as a countable union $Z\cup \bigcup_i F_i$, where the Borel sets $F_i$ enjoy the above property and $Z$ is $\mathcal{H}^k$-null. We can apply Theorem \ref{t:main} and conclude that the answer to (Q) is positive. 
The coefficients $\widetilde{\beta}_{2,E}^k$ were introduced by Azzam and Schul in their work \cite{AS} on the analyst's traveling salesman theorem in the context of lower content regular sets.
The study of the set $A_{\widetilde{\beta}_{2,E}^k}$ has been done in the context of tangent points to lower content regular sets by Villa in \cite{V}. The argument above gives that the latter set is rectifiable even when the lower content regularity is dropped.

The second author was also interested in Theorem \ref{t:main} due to his work on the higher dimensional Carleson $\varepsilon^2$ conjecture, joint with Tolsa and Villa.  Let  $\Omega^+,\Omega^-\subset \mathbb{R}^{n+1}$ be open and disjoint and let 
$B(x,r)\subset \mathbb{R}^{n+1}$ be a ball. Let $H\subset\mathbb{R}^{n+1}$ be a halfspace such that $x\in \partial H$. 
Denote $S_{H}^+ = \partial B(x,r) \cap H$ and $S_{H}^- = \partial B(x,r) \setminus \overline{H}$.
Define 
\begin{equation*}
\varepsilon_n(x,r) =  \frac1{r^{n}}\inf_H\sum_{i=+,-}\mathcal{H}^{n}\big(S_{H}^i\setminus \Omega^i\big).
\end{equation*}
\begin{theorem}
Let $\Omega^+,\Omega^-\subset\mathbb{R}^{n+1}$ be open and disjoint and let
\begin{equation*}
E:= \left\{x\in\mathbb{R}^{n+1}:\int_0^1 \varepsilon_n(x,r)^2 \, \frac{dr}{r}<\infty\right\}\, .
\end{equation*}
Then $E$ is $n$-rectifiable. 
\end{theorem}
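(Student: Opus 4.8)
The plan is to derive the statement from Theorem~\ref{t:main} and Theorem~\ref{t:ANTV}, the only real content being a quantitative comparison between the $\varepsilon_n$-square function and the $L^2$ $\beta$-numbers of upper regular measures. First I would check that $E$ is Borel, so that Theorem~\ref{t:main} applies. Rescaling $\partial B(x,r)$ to the unit sphere $S^n\subset\mathbb R^{n+1}$ via $y\mapsto(y-x)/r$ turns $r^{-n}\mathcal H^n(S_H^\pm\setminus\Omega^\pm)$ into $\sigma\big(\{\omega\in S^n:\pm\,\omega\cdot\nu\ge 0\}\cap K^\pm_{x,r}\big)$, where $\sigma=\mathcal H^n\res S^n$, $\nu$ is the inner normal of $H$ at $x$, and $K^\pm_{x,r}:=\{\omega\in S^n: x+r\omega\notin\Omega^\pm\}$ is closed and satisfies $\limsup_k K^\pm_{x_k,r_k}\subseteq K^\pm_{x,r}$ as $(x_k,r_k)\to(x,r)$ because $\Omega^\pm$ is open; the reverse Fatou lemma then makes each summand, and hence $\varepsilon_n(x,r)=\inf_\nu(\cdots)$, upper semicontinuous in $(x,r)$, so $x\mapsto\int_0^1\varepsilon_n(x,r)^2\,\frac{dr}{r}$ is Borel and $E$ is Borel. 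By Theorem~\ref{t:main} it then suffices to show that every closed set $F\subseteq E$ with $0<\mathcal H^n(F)<\infty$ is $n$-rectifiable: this rules out the purely unrectifiable closed subset that the theorem would otherwise produce, so $E$ is $\mathcal H^n$-$\sigma$-finite, and then exhausting $E$ from inside by compact subsets of finite $\mathcal H^n$ measure and invoking their rectifiability shows that $E$ is $n$-rectifiable up to an $\mathcal H^n$-null set, hence $n$-rectifiable.

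Fixing such an $F$, I would next argue exactly as in the discussion of $A_{\widetilde\beta_{2,E}^n}$ above: write $F=Z\cup\bigcup_i F_i$ with $\mathcal H^n(Z)=0$ and each $F_i$ Borel with $\mu_i:=c_i\,\mathcal H^n\res F_i$ a Frostman ($n$-upper regular) measure for suitable $c_i>0$. Since $\mathcal H^n$-null sets and countable unions of $n$-rectifiable sets are $n$-rectifiable, it is enough to prove each $F_i$ is $n$-rectifiable, and, by Theorem~\ref{t:ANTV} applied to $F_i$ (note $\beta^n_{2,c\mu}=\sqrt c\,\beta^n_{2,\mu}$, so the constant is irrelevant), this reduces to
\[
\int_0^1 \beta^n_{2,\mu_i}(x,s)^2\,\frac{ds}{s}<\infty\qquad\text{for }\mu_i\text{-a.e.\ }x.
\]

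The heart of the matter, and the step I expect to be the main obstacle, is the following Carleson-type estimate: for every $n$-upper regular measure $\mu$ with $\operatorname{spt}\mu\subseteq\overline{\Omega^+}\cap\overline{\Omega^-}$,
\[
\int_0^1 \beta^n_{2,\mu}(x,s)^2\,\frac{ds}{s}\ \lesssim\ 1+\int_0^1 \varepsilon_n(x,r)^2\,\frac{dr}{r}\qquad\text{for }\mu\text{-a.e.\ }x,
\]
with implicit constant depending only on $n$ and the upper regularity constant. This suffices to conclude, because every $x\in E$ already lies in $\overline{\Omega^+}\cap\overline{\Omega^-}$: finiteness of the $\varepsilon_n$-square function forces $\liminf_{r\downarrow 0}\varepsilon_n(x,r)=0$, so at arbitrarily small scales there is a halfspace $H$ with $\mathcal H^n(S_H^\pm\setminus\Omega^\pm)<\tfrac12\mathcal H^n(S_H^\pm)$, whence $\Omega^\pm$ meets $\partial B(x,r)$ and $x\in\overline{\Omega^\pm}$; applying the estimate to $\mu=\mu_i$ (so $\operatorname{spt}\mu_i\subseteq F_i\subseteq E$) makes the right-hand side finite, and then Theorem~\ref{t:ANTV} gives that $F_i$, hence $F$, hence $E$, is $n$-rectifiable. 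I would prove the estimate by adapting the planar $\varepsilon^2$ argument of Jaye, Tolsa and Villa to codimension one in $\mathbb R^{n+1}$, schematically as follows: (a) finiteness of the $\varepsilon_n$-square function forces the optimal halfspaces at comparable scales to be mutually close, producing an approximate tangent plane $\pi_{x,s}$ through $x$ at each small scale $s$; (b) integrating the defining sphere bounds in $r$ via the coarea formula shows that the part of $B(x,s)$ lying on the $\Omega^+$-side of $\pi_{x,s}$ but outside $\Omega^+$, and symmetrically, has $\mathcal L^{n+1}$-measure controlled by the local $\varepsilon_n$'s; (c) since $\operatorname{spt}\mu\subseteq\overline{\Omega^+}\cap\overline{\Omega^-}$, a point of $\operatorname{spt}\mu$ lying far on one side of $\pi_{x,s}$ would force a ball of definite radius to miss that bad region, which is impossible once the latter is small, so $\operatorname{spt}\mu\cap B(x,s)$ is essentially confined to a thin slab around $\pi_{x,s}$; (d) converting this into an $L^2$ bound for $\dist(\cdot,\pi_{x,s})$ against $\mu$ and summing over dyadic scales yields the displayed inequality. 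Controlling the geometric ``martingale'' and covering arguments through (a)--(d), and in particular handling the genuinely two-sided nature of $\varepsilon_n$ in codimension one, is where essentially all the work lies; this is the content of the higher-dimensional $\varepsilon^2$ program pursued by the second author jointly with Tolsa and Villa.
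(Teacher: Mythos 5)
Your proposal follows essentially the same route as the paper: establish that $E$ is Borel via upper semicontinuity of $\varepsilon_n(x,r)$, reduce via Theorem~\ref{t:main} (equivalently Corollary~\ref{c:main}) to showing that closed subsets of $E$ with finite $\mathcal{H}^n$ measure are $n$-rectifiable, and handle those by combining Theorem~\ref{t:ANTV} with the analytic estimates of \cite{FTV}. The paper itself does not reproduce the analytic core — it cites \cite{FTV} for the rectifiability of finite-measure subsets and focuses only on replacing the $\sigma$-finiteness argument with Theorem~\ref{t:main} — so your steps (a)--(d) sketching the $\beta$-vs-$\varepsilon_n$ comparison go beyond what the paper does; that sketch is only a roadmap, as you acknowledge, and the real proof lives in \cite{FTV}.
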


The strategy used in \cite{FTV} is to prove that every subset of this set with finite measure is rectifiable. In order to conclude that the whole set has $\sigma$-finite measure (and hence is $n$-rectifiable) \cite{FTV} uses a specific argument which still relies on Lemma \ref{l:Rogers}. Alternatively we can now use Theorem \ref{t:main} and Corollary \ref{c:main}. We only need to check that $A_{\varepsilon_n}$ is Souslin, but indeed it is Borel. This is already remarked in \cite{FTV}, but we include a brief discussion here. Indeed the coefficients $\varepsilon_n(x,r)$ are upper semicontinous on $(x,r)$ and thus, for every positive $s$, the map $x\mapsto \int_s^1 \varepsilon_n(x,r)^2\, \frac{dr}{r}$ is upper semicontinous. This is enough to check that the $A_{\varepsilon_n}$ is Borel. 

\subsection{Acknowledgments} Both authors acknowledge the support of the National Science Foundation through the grant FRG-1854147. The second author wishes to thank Tuomas Orponen, Pablo Shmerkin, Xavier Tolsa, and Michele Villa for some really useful conversations about the question.

\section{Geometric preliminaries}

We denote by $\mathcal{Q}_j$ the family of closed dyadic cubes of $\mathbb R^n$ with sidelength $2^{-j}$.

\begin{definition}\label{d:holes}
We say that a set $E$ is sparse if there is a sequence of integers $l_j\uparrow \infty$ with the following property. For every $Q\in \mathcal{Q}_{l_j}$ there is a subcube $Q'\subset Q$ with 
$Q'\in \mathcal{Q}_{l_j+j}$ such that 
\begin{equation}\label{e:holes}
E\subset \bigcup_{Q\in \mathcal{Q}_{l_j}} Q'\, .
\end{equation}
\end{definition}

For the purpose of Theorem \ref{t:main} the latter definition would be enough and we would need the rather standard fact that any subset of a sparse set cannot be $k$-rectifiable for any $k\geq 1$. In order to handle the more general case of doubling metric spaces it is however useful to consider a version of the sparsity defined above which uses balls in place of cubes. 

\begin{definition}\label{d:ballsholes}
We say that a set $E$ is weakly sparse if there is a sequence of real numbers $r_j \downarrow 0$ and a universal constant $N$ with the following property. For every ball $B$ of radius $r_j$ there exist at most $N$ balls $\{B_l\}_{1 \leq l \leq N}$ of radius $2^{-j}r_j$ such that
\begin{equation}\label{e:holes2}
E \cap B \subset \bigcup_{1 \leq l \leq N} B_l\, .
\end{equation}
\end{definition}

\begin{remark}
Notice that if $E$ is sparse then it is weakly sparse with $N=3^n$. This follows from the fact that a ball of radius $2^{-j+1}$ can intersect $3^n$ cubes of sidelength $2^{-j}$.
\end{remark}

The main tool we need from this section towards Theorem \ref{t:main} is the following proposition.

\begin{proposition}\label{p:holes}
Let $k$ be a positive integer and $(X,d)$ a metric space. If $F\subset X$ is an $\mathcal{H}^k$-measurable set which is weakly sparse, then $F$ is purely $k$-unrectifiable. 
\end{proposition}

\begin{proof}[Proof of Proposition \ref{p:holes}]
We can reduce to the case where $X$ is Banach space by embedding the metric space $X$ isometrically into some Banach space.

Suppose that $F$ is not purely $k$-unrectifiable. This means there exists a Borel set $\mathbb B\subset \mathbb R^k$ and a Lipschitz function $f: \mathbb B \rightarrow X$ such that $\mathcal{H}^k(f(\mathbb B) \cap F)>0$.  We will show that this cannot happen. It will be enough to show this to be the case when $f$ is a bi-Lipschitz map on $\mathbb B$. Indeed, a rectifiable subset of a Banach space can be covered, up to a set of $\mathcal{H}^k$ measure zero, by a countable union of bi-Lipschitz images of Borel sets. We appeal to chapter 7 of  \cite{MattilaSurvey}, page 56, for this particular characterization of rectifiable subsets of Banach spaces.

We will denote the bi-Lipschitz constant by $L$. We can further restrict $\mathbb B$ to an $\mathcal{H}^k$-measurable subset, such that its image via $f$ is now contained in $F$. We claim that $\mathbb B$ must be a weakly sparse subset of $\mathbb R^k$. Let $\hat{B}$ be any ball of radius ${(2L)}^{-1}r_j$.

Then $f(\hat{B}) \subseteq B$ where $B$ is a ball of radius $r_j$. This means that there exist $\{B_l\}_{1 \leq l \leq N}$ balls of radius $2^{-j}r_j$ such that
\begin{equation*}
\mathbb B \cap \hat {B} \subseteq \bigcup_{1 \leq l \leq N}f^{-1}(B_l).
\end{equation*}
There exists balls $\hat{B_l}$ of radius $L2^{-j}r_j$ that contain the ball $f^{-1}(B_l) \subset \hat{B_l}$. This is indeed the weakly sparse condition.

Now the weakly sparse condition will give us that for every $x \in \mathbb B$, the lower Lebesgue density is zero (i.e. $\Theta^k_{*}(x,\mathbb B)=0$). This implies that $\mathbb B$ has zero Lebesgue measure, and thus $f(\mathbb B)$ has zero $\mathcal{H}^k$ measure. This completes the proof of the proposition.
\end{proof}

\section{Generalized Hausdorff measures}

In this section we collect the other tool which is needed to prove Theorem \ref{t:main}. We follow \cite{Rogers} and introduce the following generalization of the Hausdorff measure $\mathcal{H}^k$. 

\begin{definition}\label{d:generalized-Hausdorff}
Given a continuous increasing function $r\mapsto h(r)$ with $h(0)=0$ and a set $E$ we define
\begin{equation*}
\HH_{\delta}^{h} (E) = \inf \left\{ \left. \sum_{i = 1}^{\infty} h \left( \mathrm{diam} (C_{i}) \right) \right| \mathrm{diam} (C_{i}) \leq \delta, \bigcup_{i = 1}^{\infty} C_{i} \supseteq E \right\}.
\end{equation*}
We also define $\HH^h(E)=\lim_{\delta \rightarrow 0} \HH_{\delta}^h(E)$.
\end{definition}

In particular the usual Hausdorff measure $\mathcal{H}^k$ amounts to the choice of $h (r) = \omega_k \left(\frac{r}{2}\right)^k$. 

We will need three key lemmas. The following is a generalization of Frostman's lemma.

\begin{lemma}\label{l:Frostman}
Let $r\mapsto h (r)$ be a continuous increasing function with $h(0)=0$. Given an analytic set $E$ such that $\mathcal{H}^h (E)>0$ there
is a Radon measure $\mu$ such that $\mu (E) >0$ and $\mu (Q) \leq h(\diam (Q))$ for every dyadic cube $Q$. 
\end{lemma}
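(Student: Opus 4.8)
The plan is to reduce to the case of a compact set and then to run the classical dyadic mass–distribution construction behind Frostman's lemma, adapted to the general gauge $h$. First I would replace $E$ by a compact subset: by the classical compact–exhaustion theorem for generalized Hausdorff measures on Souslin sets (see \cite{Rogers}), the hypothesis $\mathcal H^h(E)>0$ yields a compact $K\subseteq E$ with $0<\mathcal H^h(K)<\infty$, and $\mathcal H^h(K)>0$ in turn forces the Hausdorff content $\mathcal H^h_\infty(K)$ (defined as in Definition \ref{d:generalized-Hausdorff} but dropping the constraint $\diam C_i\le\delta$) to be positive — otherwise $K$ could be covered by sets $C_i$ with $\sum_i h(\diam C_i)$, hence also $\sup_i\diam C_i$, arbitrarily small, contradicting $\mathcal H^h(K)>0$. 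It then suffices to produce a nontrivial Radon measure supported on $K$ and satisfying $\mu(Q)\le h(\diam Q)$ for every dyadic cube $Q$.

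To build such a measure, fix a large integer $m$ and distribute on each $Q\in\mathcal Q_m$ with $Q\cap K\neq\emptyset$ a total mass $h(\diam Q)$, spread as a constant multiple of Lebesgue measure on $Q$. Then sweep through the generations $j=m-1,m-2,\dots,0$, and when processing a cube $Q\in\mathcal Q_j$ whose current mass exceeds $h(\diam Q)$, multiply the current measure restricted to $Q$ by the factor $h(\diam Q)$ divided by the current mass of $Q$; call the resulting measure $\mu_m$. By construction $\mu_m(Q)\le h(\diam Q)$ for every dyadic cube of generation $0\le j\le m$, and $\mu_m$ remains absolutely continuous with respect to Lebesgue measure. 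Moreover, for every $x\in K$ there is a dyadic cube $Q(x)\ni x$ with $\mu_m(Q(x))=h(\diam Q(x))$: namely the largest cube containing $x$ that is rescaled during the sweep, or the generation-$m$ cube containing $x$ if none is rescaled; in either case no strictly larger cube containing $x$ is rescaled after $Q(x)$ has been processed, so the mass of $Q(x)$ is frozen at $h(\diam Q(x))$. Passing to the maximal cubes among $\{Q(x):x\in K\}$, which are pairwise disjoint (being dyadic) and still cover $K$, absolute continuity and the definition of the content give
\[
\mathcal H^h_\infty(K)\le\sum_i h(\diam Q_i)=\sum_i\mu_m(Q_i)=\mu_m\Big(\bigcup_i Q_i\Big)\le\mu_m(\mathbb R^n)\,.
\]
Hence $\mu_m(\mathbb R^n)\ge\mathcal H^h_\infty(K)>0$ uniformly in $m$, while $\mu_m(\mathbb R^n)$ is bounded above by $h(\sqrt n)$ times the finite number of generation-$0$ cubes meeting $K$, and $\operatorname{supp}\mu_m$ lies within distance $\sqrt n\,2^{-m}$ of $K$.

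By weak-$*$ compactness I would extract a subsequence $\mu_{m_i}\rightharpoonup\mu$. Since the supports shrink to $K$, the limit $\mu$ is carried by $K$ and $\mu(K)=\lim_i\mu_{m_i}(\mathbb R^n)\ge\mathcal H^h_\infty(K)>0$. For a fixed dyadic cube $Q$, of generation $j_0$ say, and every $m_i>j_0$, each of the at most $3^n$ cubes of $\mathcal Q_{j_0}$ meeting a fixed slightly dilated open copy $Q^+$ of $Q$ carries $\mu_{m_i}$-mass at most $h(\diam Q)$; testing against a continuous cutoff equal to $1$ on $Q$ and supported in $Q^+$ gives $\mu(Q)\le 3^n h(\diam Q)$. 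Replacing $\mu$ by $3^{-n}\mu$ therefore yields a nontrivial Radon measure with $\mu(Q)\le h(\diam Q)$ for every dyadic cube $Q$ and $\mu(E)\ge\mu(K)>0$, which is what we want.

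The only non-routine ingredient — and the step I expect to be the real obstacle — is the reduction to a compact set in the first paragraph: this is exactly where the Souslin hypothesis is genuinely used, through the capacitability of Souslin sets for the Hausdorff content (equivalently, Rogers' compact-exhaustion theorem for generalized Hausdorff measures). The dyadic construction and the weak-$*$ passage to the limit are classical; the only minor nuisance there is the open-versus-closed cube bookkeeping in the last paragraph, which costs nothing more than the harmless dimensional factor $3^n$.
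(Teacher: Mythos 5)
Your proof is correct and essentially identical to the one in the paper: both reduce to a compact subset via the Rogers--Davies compact-exhaustion theory for $\mathcal H^h$ on Souslin sets, then run the standard dyadic Frostman construction (as in Theorem 8.8 of Mattila) adapted to the gauge $h$, extract a weak-$*$ limit, absorb the $3^n$ loss, and bound the total mass from below by $\mathcal H^h_\infty$ of the compact set. The only cosmetic difference is that the paper passes first to an $F_\sigma$ subset of full $\mathcal H^h$-measure before restricting to a compact piece, whereas you go straight to a compact subset of positive finite measure.
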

\begin{proof} We can use \cite[Corollary 2, Section 6.2]{Rogers} to find an $F_\sigma$ set $E'\subset E$ with $\mathcal{H}^h (E')=\mathcal{H}^h (E)$, which in turn reduces the lemma to the case of a compact $E$. We can then follow the proof of \cite[Theorem 8.8]{Mattila}. We report it here for the reader's convenience to highlight that, even though the proof in \cite[Theorem 8.8]{Mattila} is written for the case $h (r) = \omega_s \left(\frac{r}{2}\right)^s$, namely that of classical Hausdorff measures, it works for general $h$. 

Without loss of generality we assume that $E\subset [0,1]^n$. We fix a positive integer $m$ and define the measure $\mu_{m,m}$ as 
\begin{itemize}
\item $\mu_{m,m} \res Q = h (\diam (Q)) 2^{-n m}\mathscr{L}^n \res Q$ for every $Q\in \mathcal{Q}_m$ such that $Q\cap E\neq \emptyset$;
\item $\mu_{m,m} \res Q =0$ otherwise.
\end{itemize}
We then define $\mu_{m, l}$ inductively with $l$ decreasing till $\mu_{m,0}$. Assuming that $\mu_{m,l}$ has been defined and $l>0$, we define $\mu_{m, l-1}$ on each cube $Q\in \mathcal{Q}_{l-1}$ as follows:
\begin{itemize}
\item $\mu_{m, l-1} \res Q = \mu_{m, l} \res Q$ if $\mu_{m,l} (Q) \leq h (\diam (Q))$;
\item $\mu_{m, l-1} \res Q = h (\diam (Q)) (\mu_{m,l} (Q))^{-1} \mu_{m,l} \res Q$ otherwise.
\end{itemize}
We then set $\mu_m := \mu_{m,0}$ and observe that $\mu_{m,0} (\mathbb R^n) \leq h (\diam ([0,1]^n)) = h (\sqrt{n})$. In particular we can assume the existence of a subsequence, not relabeled, converging weakly$^*$ to some Radon measure $\mu$. Observe that 
$\mu_m (Q) \leq h (\diam (Q))$ for every dyadic cube $Q$ with $\diam (Q) \geq \sqrt{n} 2^{-m}$. However, since $Q$ is closed, we have the inequality $\mu (Q) \geq \limsup_{m\uparrow \infty} \mu_m (Q)$, which of course cannot be used to get an upper bound on $\mu (Q)$. Rather, for every fixed $Q$, consider the $3^n$ dyadic cubes $Q_1, \ldots , Q_{3^n}$ with same diameter which have nonempty intersection with it. If we let $U$ be the interior of $\bigcup_i Q_i$, then obviously 
$\mu_m (U) \leq 3^n h (\diam (Q))$ whenever $\sqrt{n} 2^{-m} \leq \diam (Q)$. On the other hand $U$ is open and contains $Q$, so we can conclude
\[
\mu (Q) \leq \mu (U) \leq \liminf_{m\to \infty} \mu_m (Q) \leq 3^n h (\diam (Q))\, .
\]
In particular, up to dividing by $3^n$, we achieve the inequality $\mu (Q) \leq h (\diam (Q))$ for every dyadic $Q$.

Next consider that $\mu_m$ is, by construction, supported in the union of cubes of diameter $\sqrt{n} 2^{-m}$ which intersect $E$, and thus in the $\sqrt{n} 2^{-m}$-neighborhood of $E$. Since $E$ is closed, $\mu$ is therefore supported in $E$. Hence, as soon as we show that $\mu (\mathbb R^n) > 0$, we are done. On the other hand $\mu$ is supported in $[0,1]^n$ and therefore $\mu (\mathbb R^n) = \lim_m \mu_m (\mathbb R^n)$. It thus suffices to find a lower bound for $\mu_m (\mathbb R^n)$. Fix $m$ and observe that each $x\in E$ is contained in a cube $Q$ of diameter at least $\sqrt{n} 2^{-m}$ such that $\mu_m (Q) = h (\diam (Q))$ and $Q\subset [0,1]^n$. Denote it by $Q_x$. $\{Q_x\}_{x\in E}$ is then a finite collection of dyadic cubes. So, given two different $x$ and $y$ in $E$, either $Q_x\subset Q_y$, or the intersection of $Q_x$ and $Q_y$ is contained in $\partial Q_x$. In particular if we let $\mathcal{C}$ denote the family of those $Q_x$ which are maximal under inclusion, we find that the elements of $\mathcal{C}$ have disjoint interiors, while they form a covering of $E$. Note that $\mu_m$ is absolutely continuous with respect to the Lebesgue measure: though two distinct cubes $Q, Q'\in \mathcal{C}$ might have nonempty intersection, the latter has zero Lebesgue measure, and hence zero $\mu_m$ measure. We therefore have 
\[
\sum_{Q\in \mathcal{C}} h (\diam (Q)) = \sum_{Q\in \mathcal{C}} \mu_m (Q) \leq \mu_m (\mathbb R^n)\, .
\]   
However, because $\mathcal{C}$ is a cover of $E$, we find
\[
\mathcal{H}^h_\infty (E) \leq \sum_{Q\in \mathcal{C}} h (\diam (Q)) \leq \mu_m (\mathbb R^n)\, .
\]
Now, since $\mathcal{H}^h (E) >0$, we must have $\mathcal{H}^h_\infty (E) >0$ as well, in particular we have found the desired positive lower bound for $\mu_m (\mathbb R^n)$.
\end{proof}

The second is a classical result for ``usual'' Hausdorff measures, which follows from combining \cite[Theorem 8.8]{Mattila} with \cite[Theorem 8.13]{Mattila}.

\begin{lemma}\label{l:inverse-Frostman}
Assume $\nu$ is a (nontrivial) Radon measure on $\mathbb R^n$ and $s > 0$ a real number such that $\nu (Q) \leq  (\diam (Q))^s$ for every dyadic cube $Q$. Then there is a closed subset $F\subset {\rm spt}\, (\nu)$ such that $0< \mathcal{H}^s (F) < \infty$. 
\end{lemma}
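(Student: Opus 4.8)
The plan is to reduce the statement, through a couple of elementary observations, to the classical Besicovitch--Davies theorem, which we will use in the form of \cite[Theorem 8.13]{Mattila}: every Souslin subset of $\mathbb R^n$ with positive $\mathcal{H}^s$ measure contains a compact subset of finite, positive $\mathcal{H}^s$ measure. The hypotheses on $\nu$ will be used only to exhibit ${\rm spt}\,(\nu)$ as such a set.

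First I would upgrade the control of $\nu$ on dyadic cubes to a control on balls. A ball $\overline{B}_r (x)$ has diameter $2r$, hence it is contained in the union of at most $2^n$ adjacent closed dyadic cubes of sidelength $2^{-j}$, where $j$ is picked so that $2r \leq 2^{-j} < 4r$; since each such cube $Q$ satisfies $\diam (Q) = \sqrt{n}\, 2^{-j} \leq 4\sqrt{n}\, r$, the hypothesis $\nu (Q) \leq (\diam (Q))^s$ yields a dimensional constant $C = C(n,s)$ with
\[
\nu (\overline{B}_r (x)) \leq C\, r^s \qquad \mbox{for every } x \in \mathbb R^n \mbox{ and every } r > 0\, .
\]
From this a routine covering argument gives $\nu (A) \leq C\, \mathcal{H}^s (A)$ for every set $A$: for a cover $\{C_i\}$ of $A$ with $\diam (C_i) \leq \delta$ one has $C_i \subset \overline{B}_{\diam (C_i)} (x_i)$ for any choice of $x_i \in C_i$, so $\nu (A) \leq \sum_i \nu (C_i) \leq C \sum_i (\diam (C_i))^s$, and one passes to the infimum over covers and lets $\delta \downarrow 0$. (This is just the elementary half of Frostman's lemma \cite[Theorem 8.8]{Mattila}.) Taking $A = {\rm spt}\,(\nu)$ and recalling that $\nu ({\rm spt}\,(\nu)) = \nu (\mathbb R^n) > 0$ because $\nu$ is nontrivial, we obtain $\mathcal{H}^s ({\rm spt}\,(\nu)) > 0$.

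It now suffices to observe that ${\rm spt}\,(\nu)$ is closed, hence Borel, hence Souslin, and satisfies $0 < \mathcal{H}^s ({\rm spt}\,(\nu)) \leq \infty$; thus \cite[Theorem 8.13]{Mattila} provides a compact set $F \subset {\rm spt}\,(\nu)$ with $0 < \mathcal{H}^s (F) < \infty$, and a compact set is in particular closed. As for where the difficulty lies: there is essentially none on our side, since the reduction above is completely elementary and all the substance sits inside the cited Besicovitch--Davies theorem. The only point deserving a moment of care is that the set fed into \cite[Theorem 8.13]{Mattila} be a Souslin (here: closed) set of positive $\mathcal{H}^s$ measure, which is precisely what the first two steps arrange; one may also note that when $\mathcal{H}^s ({\rm spt}\,(\nu))$ is already finite the last step does not even require that theorem, because $\mathcal{H}^s \res {\rm spt}\,(\nu)$ is then a finite Radon measure, hence inner regular, so a compact subset of positive measure is found directly.
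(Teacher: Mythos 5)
Your proof is correct and follows exactly the same route as the paper, which disposes of Lemma \ref{l:inverse-Frostman} in one line by citing \cite[Theorem 8.8]{Mattila} (the elementary Frostman-type comparison $\nu \lesssim \mathcal{H}^s$, which gives $\mathcal{H}^s({\rm spt}\,\nu)>0$) together with \cite[Theorem 8.13]{Mattila} (the Besicovitch--Davies compact-subset theorem). You have simply spelled out the details of passing from the dyadic-cube bound to a ball bound and then to the Hausdorff measure comparison, which is exactly what the cited Theorem 8.8 does.
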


The third is perhaps the most critical ingredient. In the case of a closed set $E$ it is a theorem of Rogers in \cite{Rogers2}, generalizing a classical one by Besicovitch in the plane, cf. \cite[Theorem 3]{Besicovitch}. The case of an analytic set is achieved combining Roger's Theorem with \cite[Theorem 6.6]{SS}, which implies that any analytic set which is not $\mathcal{H}^h$ $\sigma$-finite contains a compact set which is not $\mathcal{H}^h$-$\sigma$-finite (the case $h (r)=r^s$ of the latter theorem is due to Davies in \cite{Davies}). 

\begin{lemma}\label{l:Rogers}
Let $r\mapsto h (r)$ be a continuous increasing function with $h(0)=0$ and let $E\subset \mathbb R^n$ be an analytic set which is not $\mathcal{H}^h$-$\sigma$-finite. Then there is a continuous function $r\mapsto g (r)$ with $g (0)=0$ such that  $E$ is still not $\mathcal{H}^{g}$-$\sigma$-finite and 
\begin{equation}\label{e:vanishing}
\lim_{r\downarrow 0} \frac{g(r)}{h (r)} = 0\, .
\end{equation}
\end{lemma}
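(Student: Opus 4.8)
The plan is to reduce Lemma~\ref{l:Rogers} to the case where $E$ is compact, and then extract from the failure of $\sigma$-finiteness a quantitative rate at which coverings of $E$ must be inefficient, from which the modified gauge $g$ can be built. For the reduction, I would invoke \cite[Theorem 6.6]{SS} (or Davies' theorem \cite{Davies} in the power case) exactly as advertised in the text preceding the statement: since $E$ is analytic and not $\mathcal{H}^h$-$\sigma$-finite, it contains a compact subset $K$ which is not $\mathcal{H}^h$-$\sigma$-finite. If we produce a gauge $g$ with $g/h\to 0$ for which $K$ is not $\mathcal{H}^g$-$\sigma$-finite, then \emph{a fortiori} neither is $E\supseteq K$. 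So it suffices to handle compact sets, which is precisely Rogers' theorem from \cite{Rogers2}; thus the \textbf{honest content} of the proof is just citing Rogers for the compact case and \cite{SS} for the reduction.

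If instead one wanted to reprove the compact case rather than quote \cite{Rogers2}, I would proceed as follows. The key mechanism is that $\mathcal{H}^h(K)=\infty$ together with non-$\sigma$-finiteness forces, for every $\delta>0$, a large gap between the $\delta$-premeasure $\mathcal{H}^h_\delta(K)$ and the unrestricted content $\mathcal{H}^h_\infty(K)$; more precisely, one can find a decreasing sequence $\delta_j\downarrow 0$ and a sequence $N_j\uparrow\infty$ such that every cover of $K$ by sets of diameter $\le\delta_j$ has $h$-sum at least $N_j$, while still controlling the behavior at scales above $\delta_j$. The idea is then to define $g$ to be piecewise a rescaled copy of $h$: on the interval $(\delta_{j+1},\delta_j]$ set $g(r) = \varepsilon_j\, h(r)$ where $\varepsilon_j\downarrow 0$ is chosen slowly enough (relative to how fast $N_j\to\infty$) that the new $g$-sums along the sequence $\delta_j$ still diverge, i.e. $\varepsilon_j N_j\to\infty$. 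One must interpolate between the pieces so that $g$ stays continuous and increasing with $g(0)=0$ — this is a routine patching of the constants $\varepsilon_j$ so the graph does not jump at the scales $\delta_j$. Then $g(r)/h(r)=\varepsilon_j\to 0$ as $r\to 0$, giving \eqref{e:vanishing}, and the divergence of $\varepsilon_j N_j$ ensures $\mathcal{H}^g(K)=\infty$; a slightly more careful bookkeeping (choosing the $\varepsilon_j$ so that one cannot ``hide'' mass in countably many pieces) upgrades this to non-$\sigma$-finiteness.

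The main obstacle in the self-contained route is the last point: arranging that $K$ is not merely of infinite $\mathcal{H}^g$ measure but genuinely not $\mathcal{H}^g$-$\sigma$-finite. Infinite measure is cheap — any single divergent sequence of scales delivers it — but ruling out a decomposition $K=\bigcup_i K_i$ with $\mathcal{H}^g(K_i)<\infty$ requires a diagonalization: using that $K$ is not $\mathcal{H}^h$-$\sigma$-finite, one knows that for \emph{every} such decomposition some $K_i$ has $\mathcal{H}^h(K_i)=\infty$, and one needs the choice of $\{\varepsilon_j\}$ to be robust enough that the corresponding $g$-measure of that $K_i$ is still infinite. This is exactly the delicate combinatorial heart of Besicovitch's and Rogers' arguments, and it is the reason the paper simply cites \cite{Rogers2,SS,Davies} rather than reproducing it. Accordingly, for the write-up I would present only the reduction-to-compact step explicitly and defer the compact case to these references, as the surrounding text already does.
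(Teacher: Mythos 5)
Your proposal matches the paper exactly: the paper's ``proof'' of Lemma~\ref{l:Rogers} consists precisely of the reduction to a compact subset via Sion--Sjerve (Davies in the power case) followed by a citation of Rogers' theorem for compact sets, which is what your first paragraph does. Your optional sketch of reproving Rogers' theorem is fine as a heuristic, and you correctly flag that it proves only $\mathcal{H}^g(K)=\infty$ rather than non-$\sigma$-finiteness and that the latter is the real content of Besicovitch--Rogers; since the paper does not attempt that argument either, there is nothing further to compare.
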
 

\section{Proof of Theorem \ref{t:main}}

We start with a set $E$ as in Theorem \ref{t:main} and we apply Lemma \ref{l:Rogers} to find a continuous function $h$ such that $h (0)=0$, $r\mapsto h (r)$ is increasing, 
\begin{equation}\label{e:vanishing-1}
\lim_{r\downarrow 0} \frac{h(r)}{r^k}=0\, ,
\end{equation}
and $E$ is not $\mathcal{H}^h$ $\sigma$-finite. We next apply Lemma \ref{l:Frostman} to find a corresponding measure $\mu$. We can, without loss of generality, assume $\mu (\mathbb R^n\setminus E)=0$, since we can restrict $\mu$ to $E$ (note that, since $E$ is analytic, $E$ is also universally measurable, which in particular implies that it is $\mu$-measurable). Since $\mu$ is a Radon measure, we can find a compact subset $E'$ such that $\mu (E')>0$. Without loss of generality we assume that $\mu$ is in fact supported in $E'$ (again, it suffices to substitute $\mu$ with $\mu \res E'$). We next prove the following lemma.

\begin{lemma}\label{l:key-lemma}
Let $h$ be a continuous increasing function such that \eqref{e:vanishing-1} holds and $\mu$ a nontrivial Radon measure such that $\mu (Q) \leq h (\diam (Q))$ for every dyadic cube $Q$. For any given integer $\ell$ there is nontrivial Radon measure $\nu$ with the following properties:
\begin{itemize}
\item[(a)] $\nu (Q) \leq (\diam (Q))^k$ for every dyadic cube $Q$;
\item[(b)] the support of $\nu$ is contained in the support of $\mu$;
\item[(c)] the support of $\nu$ is sparse.
\end{itemize}
\end{lemma}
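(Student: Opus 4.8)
The plan is to construct $\nu$ by an iterative sparsification procedure, thinning out $\mu$ on a sequence of dyadic scales so that at each chosen scale the support sits in a single small subcube of each ambient cube. The starting point is the observation that, since $h(r)/r^k\to 0$ by \eqref{e:vanishing-1}, for any threshold $\epsilon>0$ and any large enough scale $2^{-L}$ we have $h(\diam Q)\leq \epsilon\,(\diam Q)^k$ for all dyadic cubes $Q$ of sidelength $\leq 2^{-L}$; this gives us ``room'' to absorb the loss of mass incurred by sparsification. First I would fix a rapidly increasing sequence of scales $l_1<l_2<\cdots$ (to be chosen along the way), and on each cube $Q\in\mathcal{Q}_{l_j}$ select the subcube $Q'\in\mathcal{Q}_{l_j+\ell}$ of maximal $\mu$-measure. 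The combinatorial gain is that $Q$ contains $2^{n\ell}$ subcubes in $\mathcal{Q}_{l_j+\ell}$, so the chosen $Q'$ carries at least a $2^{-n\ell}$ fraction of $\mu(Q)$; restricting to the union of these $Q'$ over all $Q$ at once loses at most a factor $2^{-n\ell}$ of the total mass at that round.

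The measure is then built as a weak-$*$ limit. Define $\mu_0=\mu$, and having defined $\mu_{j-1}$, set $\mu_j := c_j\,\mu_{j-1}\res\big(\bigcup_{Q\in\mathcal{Q}_{l_j}}Q'_Q\big)$ where $Q'_Q$ is the maximal subcube chosen for $Q$ and $c_j\geq 2^{n\ell}$ is a renormalization constant chosen so that the total mass stays constant, say $\mu_j(\mathbb{R}^n)=\mu(\mathbb{R}^n)=:m_0>0$. Passing to a weak-$*$ convergent subsequence yields a nontrivial Radon measure $\nu$ with $\nu(\mathbb{R}^n)\leq m_0$; nontriviality follows because the total masses are pinned at $m_0$ and the supports shrink into a nested family of compact sets, so $\nu(\mathbb{R}^n)=m_0>0$ exactly as in the lower-bound argument in the proof of Lemma \ref{l:Frostman}. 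Property (b) is immediate since each $\mu_j$ is supported in $\mathrm{spt}\,\mu$ and this is closed. Property (c) is essentially built in: for each $j$, $\mu_j$ — and hence every later $\mu_i$, and hence $\nu$ — is supported in $\bigcup_{Q\in\mathcal{Q}_{l_j}}Q'_Q$, which is exactly the sparseness condition \eqref{e:holes} of Definition \ref{d:holes} with sidelength gap $\ell$; here I must take care to keep $l_{j+1}>l_j+\ell$ so that successive rounds are compatible and the sequence $l_j\uparrow\infty$ is the witness.

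The main obstacle — and the point where the choice of the scales $l_j$ and of $h$ must be used carefully — is verifying (a): the Frostman bound $\nu(Q)\leq(\diam Q)^k$. The renormalization by $c_j\geq 2^{n\ell}$ blows up the measure multiplicatively at each round, and if we performed infinitely many rounds at comparable scales the limit would fail any upper regularity bound. The resolution is that we only need the bound \emph{per cube}, and a cube $Q$ of sidelength $2^{-p}$ only ``sees'' the sparsification rounds at scales $l_j< p$; moreover, by choosing the scales $l_j$ extremely sparse, I can ensure that between round $j$ and round $j+1$ the function $h$ has decayed so much relative to $r^k$ that the accumulated blow-up factor $c_1c_2\cdots c_j$ is dominated by $(\diam Q)^k/h(\diam Q)$ for every cube $Q$ with $2^{-l_{j+1}}\leq \diam Q$. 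Concretely, after choosing $l_1,\dots,l_j$ (which determines $c_1,\dots,c_j$, each bounded by a fixed function of $\ell,n$ times a quantity depending on $\mu$ through the measures of finitely many cubes), I invoke \eqref{e:vanishing-1} to pick $l_{j+1}$ so large that $h(r)\leq (c_1\cdots c_j)^{-1} r^k$ for all $r\leq \sqrt n\,2^{-l_{j+1}}$. Then for any dyadic cube $Q$ with $\diam Q\leq \sqrt n\,2^{-l_{j+1}}$ and $\diam Q$ large enough that no further rounds affect it, the identity $\mu_i\res Q$ is obtained from $\mu\res Q'$ (for some subcube $Q'\subset Q$) multiplied by at most $c_1\cdots c_j$, giving $\nu(Q)\leq \limsup_i \mu_i(U)\leq (c_1\cdots c_j)\,\mu(Q)\leq (c_1\cdots c_j)\,h(\diam Q)\leq (\diam Q)^k$, where as in Lemma \ref{l:Frostman} I pass through a slightly larger open set $U\supset Q$ to handle the weak-$*$ upper semicontinuity on closed sets (this costs a harmless dimensional factor which can be absorbed by enlarging $\ell$ or shrinking into a fixed fraction of each $Q'$). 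The finitely many ``large'' cubes not covered by this scheme are handled directly: for $\diam Q$ bounded below, $(\diam Q)^k$ is bounded below by a constant, and one simply rescales the whole measure $\nu$ by a small constant at the end — this rescaling does not affect (b) or (c) and only helps (a).
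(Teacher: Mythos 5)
Your overall plan --- sparsify at a rapidly increasing sequence of dyadic scales $l_j$ by selecting, in each cube of $\mathcal{Q}_{l_j}$, the subcube of $\mathcal{Q}_{l_j+\ell}$ of maximal mass, and then pass to a weak-$*$ limit --- is exactly the skeleton of the paper's proof. The decisive difference, and the source of a genuine gap, is that you renormalize by a single \emph{global} constant $c_j$ chosen to preserve total mass, whereas the paper renormalizes \emph{per cube}: for each $Q\in\mathcal{Q}_{l_j}$ with $\nu_{j-1}(Q)>0$ it sets $\nu_j\res Q=\frac{\nu_{j-1}(Q)}{\nu_{j-1}(Q')}\,\nu_{j-1}\res Q'$, so that $\nu_j(Q)=\nu_{j-1}(Q)$ exactly for \emph{every} $Q\in\mathcal{Q}_{l_j}$. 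This local mass-preservation is what makes the sequence stabilize on coarser cubes and is crucial for the Frostman bound (a). (As a minor remark, the inequality $c_j\geq 2^{n\ell}$ you wrote is reversed: since each maximal subcube carries at least a $2^{-n\ell}$ fraction of the ambient cube's mass, the global constant satisfies $1\leq c_j\leq 2^{n\ell}$.)

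The gap appears precisely in your verification of (a). You estimate, for a cube $Q$ at scale between $l_j$ and $l_{j+1}$, that ``$\nu(Q)\leq\limsup_i\mu_i(U)\leq(c_1\cdots c_j)\,\mu(Q)$,'' on the grounds that ``no further rounds affect $Q$.'' But this is false with global renormalization: every subsequent round $i>j$ first cuts away part of $\mu_{i-1}\res Q$ (retaining at least a $2^{-n\ell}$-fraction) and then multiplies the whole measure by $c_i\leq 2^{n\ell}$, so $\mu_i(Q)$ can be as large as $2^{n\ell}\mu_{i-1}(Q)$. The correct bound is therefore $\mu_i(Q)\leq(c_1\cdots c_i)\,\mu(Q)$ with an \emph{unbounded} product of constants as $i\to\infty$, and nothing in your argument controls $\prod_i c_i$. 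The worst case (local retention rate near $1$ inside $Q$ while the global rate is near $2^{-n\ell}$) is not excluded by any of your hypotheses, so the measure of a fixed coarse cube can drift upward over infinitely many rounds and the weak-$*$ limit need not satisfy $\nu(Q)\leq(\diam Q)^k$. The paper's per-cube renormalization eliminates this drift at the source: once $l_j$ exceeds the scale of $Q$, the values $\nu_i(Q)$ are constant in $i$, so the only estimate needed is the one-step bound $\nu_j(Q)\leq 2^{n\ell}\nu_{j-1}(Q)$ propagated against the decay $h(r)/r^k\leq 2^{-jn\ell}$ at scale $l_j$ --- which is precisely the induction hypothesis (ii) in the paper. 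To repair your proof you would essentially have to replace the global $c_j$ with per-cube ratios and then rerun your own scale-selection argument; at that point the two proofs coincide.
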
 

Having Lemma \ref{l:key-lemma} at our disposal, we find a corresponding measure $\nu$. We can then apply Lemma \ref{l:inverse-Frostman} to find a set $F$ in the support of $\nu$ such that $0< \mathcal{H}^k (F) < \infty$. It follows from our discussion so far that $F$ is contained in the set $E$ given at the very beginning. On the other hand such set $F$ must be sparse and hence, by Proposition \ref{p:holes}, it is purely $k$-unrectifiable, thus completing the proof of Theorem \ref{t:main}. We are thus left with proving Lemma \ref{l:key-lemma}.

\begin{proof} We fix $\mu$ as in the statement and, without loss of generality, we assume that $\mu$ is supported in $[0,1]^n$ and that $\mu$ is a probability measure. We also assume, without loss of generality, that 
\begin{equation}\label{e:annoying}
\mu (\partial Q) = 0 \qquad \mbox{for every dyadic cube $Q$.}
\end{equation}
In fact, if the condition \eqref{e:annoying} does not hold, it means that $\mu (F) >0$ for some face $F$ of $\partial Q$. We can then restrict $\mu$ to the latter face, and reduce the statement to the same one in $\mathbb R^{n-1}$. We can iterate this procedure and note that however it has to stop before lowering the dimension of the ambient space to $k$, since obviously the support of $\mu$ is not $\mathcal{H}^k$ $\sigma$-finite. 

\medskip

The measure $\nu$ will be constructed as weak$^*$ limit of an appropriate sequence $\nu_j$ of measures. The latter, which will be constructed inductively from $\mu = \nu_0$, will all be probability measures satisfying the following properties.
\begin{itemize}
\item[(i)] The support of $\nu_j$ is contained in the support of $\nu_{j-1}$ and \eqref{e:annoying} holds with $\nu_j$ in place of $\mu$. 
\item[(ii)] $\nu_j (Q) \leq \min \{2^{j^2 n} h (\diam (Q)), C_0 (\diam (Q))^k\}$ for all dyadic cubes $Q$, where the constant $C_0\geq 1$ is chosen so that $h(r) \leq C_0 r^k$ for every $r\leq \sqrt{n}$. 
\end{itemize}
Moreover, there will be a sequence of integers $l_j$ satisfying $l^1_{j+1}\geq l_j+j$ such that $\nu_j$ satisfies the following additional properties:
\begin{itemize}
\item[(iii)] $\nu_j (Q) = \nu_{j-1} (Q)$ for all dyadic cubes $Q\in \mathcal{Q}_{l_j}$.
\item[(iv)] For $j\geq 1$ there is a family $\mathcal{F}_j \subset \mathcal{Q}_{l_j +j}$ such that
\begin{itemize}
\item[(iv')] $\nu_j$ is supported in $\bigcup_{Q'\in \mathcal{F}_j} Q'$;
\item[(iv'')] For each cube $Q\in \mathcal{Q}_{l_j}$ there is at most one cube $Q'\in \mathcal{F}_j$ such that $Q'\subset Q$.
\end{itemize}
\end{itemize}
Observe that, combining (iii) and (i) we actually know that
\begin{itemize}
\item[(iii')] $\nu_j (Q) = \nu_{j-1} (Q)$ for all dyadic cubes $Q\in \mathcal{Q}_l$ with $l\leq l_j$.
\end{itemize}
Moreover $\nu_0=\mu$ satisfies all the requirements.

Before coming to the inductive construction of $\nu_j$, let us observe that $\nu_j$ converges weakly$^*$ to some measure $\nu$ (we could in fact appeal to the weak$^*$ compactness of probability measures to conclude the existence of such a limit for an appropriate subsequence, but actually by (iii') there is no need for such an extraction) and that such $\nu$ satisfies the requirements of the lemma. In fact ${\rm spt}\, (\nu) \subset {\rm spt}\, (\nu_j)$ for every $j$ by (i), which in particular implies that ${\rm spt}\, (\nu)\subset {\rm spt}\, (\nu_0)= {\rm spt}\, (\mu)$. Moreover, because of (iv), we conclude that ${\rm spt}\, (\nu)$ is $\ell$-sparse. Observe that, because of (ii), $\nu_j ([0,1]^n)= \mu ([0,1]^n) = 1$ for every $j$, and in particular $\nu$ must be a probability measure. Finally, from (ii) and (iii) we also conclude (a) after rescaling. 

\medskip

We now come to the construction of $\nu_j$ from $\nu_{j-1}$.  First we pick $l_j \geq l_{j-1} + j$ so that 
\begin{equation}\label{e:condition-1}
\frac{h (\diam (Q))}{(\diam (Q))^k} \leq 2^{-n j^2} \qquad \forall Q\in \mathcal{Q}_l \mbox{ with $l\geq l_j$,}
\end{equation} 
which obviously is possible because of \eqref{e:vanishing-1}. 
Next, select all the cubes $Q\in \mathcal{Q}_{l_j}$ for which $\nu_{j-1} (Q)>0$ and call this collection $\mathscr{G}$. Secondly, subdivide each cube $Q$ into $2^{nj}$ cubes belonging to $\mathcal{Q}_{l_j + j}$ and select one $Q'$ among them so that
\[
\nu_{j-1} (Q') \geq 2^{-nj} \nu_{j-1} (Q)\, .
\] 
We next define $\mathcal{F}_j$ to be the collection of such $Q'$ and 
and we define $\nu_j$ on each cube $Q\in \mathscr{G}$ as 
\begin{equation}\label{e:reduced}
\nu_j \res Q = \frac{\nu_{j-1} (Q)}{\nu_{j-1} (Q')} \nu_{j-1} \res Q'\, .
\end{equation}
Note that the latter gives a well defined measure because we know $\nu_{j-1} (\partial Q)=0$ for every dyadic cube $Q$. 

Observe that (i), (iii), and (iv) all hold by construction. As for (ii), because of (iii'), the inequality holds for any cube $Q\in \mathcal{Q}_l$ with $l\leq l_j$. We thus need to show it for any cube $Q\in \mathcal{Q}_l$ with $l>l_j$. But by construction such a cube must be contained in an element $\bar{Q}\in \mathscr{G}$. If we denote by $\bar Q'$ the element of $\mathcal{F}_j$ contained in $\bar Q$, \eqref{e:reduced} yields the inequality 
\[
\nu_j (Q) \leq  \frac{\nu_{j-1} (\bar Q)}{\nu_{j-1} (\bar Q')} \nu_{j-1} (Q) \leq 2^{nj} \nu_{j-1} (Q)\, . 
\] 
Next, observe that, since (iii) holds for $\nu_{j-1}$, we have  
\begin{align*}
\nu_j (Q) &\leq 2^{nj} \min \{ 2^{(j-1)^2 n} h (\diam (Q)), C_0 (\diam (Q))^k\} \\
& \leq
2^{j^2n} h (\diam (Q)) = \min \{ 2^{j^2n} h (\diam (Q)), C_0 (\diam (Q))^k\}\, ,
\end{align*}
where we have used \eqref{e:condition-1}, which we can apply because $Q\in \mathcal{Q}_l$ and $l>l_j$. 
In particular this shows the estimate (ii) for any dyadic cube, and hence completes the proof. 
\end{proof} 
\begin{proof}[Sketch of the proof of Remark \ref{r:metric}]
We provide a brief sketch of the modifications to be made. The interested reader can fill the details. 

First of all, any $\sigma$-compact metric space $X$ can be written as the countable union of compact subsets $X_i$ of $X$, while if $E\subset X$ is not $\sigma$-finite for the Hausdorff $k$-dimensional measure, then for some $i$ $E\cap X_i$ must be non $\sigma$-finite for the Hausdorff $k$-dimensional measure on $X_i$. This shows that our statement reduces to compact metric spaces. 

Lemma \ref{l:Rogers} can be used in the same way as before, in particular in the paper \cite{SS} the lemma is proved in compact metric spaces.  

The proofs of Lemma \ref{l:key-lemma} and Lemma \ref{l:Frostman} rely on the dyadic cube structure of the Euclidean space, which still exists on doubling metric spaces. Note that the constant $N$ will depend precisely on the doubling constant of the metric space.

The dyadic cube structure of doubling metric spaces has been frequently exploited in harmonic analysis and there is much literature on the topic. The reader can consult for instance \cite{HK}. 

We can then complete the proof using Proposition \ref{p:holes}. 
\end{proof}
\bibliographystyle{plain}

\end{document}